\def\Pd{{{\mathcal M}_d(P)}}
\def\Ptwo{{\mathcal M}_2(P)}
\def\RR{{\mathbb R}}
\def\ZZ{{\mathbb Z}}
\newtheorem{Theorem}{Theorem}[section]
\newtheorem{Lemma}[Theorem]{Lemma}
\newtheorem{Proposition}[Theorem]{Proposition}
\theoremstyle{definition}
\newtheorem{Example}[Theorem]{Example}
\begin{document}

\title{A Gr\"obner basis characterization for chordal comparability graphs}
\author{Hidefumi Ohsugi and Takayuki Hibi}

\address{Hidefumi Ohsugi,
Department of Mathematical Sciences,
School of Science and Technology,
Kwansei Gakuin University,
Sanda, Hyogo 669-1337, Japan} 
\email{ohsugi@kwansei.ac.jp}

\address{Takayuki Hibi,
Department of Pure and Applied Mathematics,
Graduate School of Information Science and Technology,
Osaka University,
Suita, Osaka 565-0871, Japan}
\email{hibi@math.sci.osaka-u.ac.jp}

\keywords{partially ordered sets, comparability graphs, strongly chordal graphs,
Gr\"obner bases, toric ideals}

\begin{abstract}
In this paper, we study toric ideals associated with multichains of posets.
It is shown that the comparability graph of a poset is chordal 
if and only if there exists a quadratic Gr\"obner basis of the toric ideal
of the poset.
Strong perfect elimination orderings of strongly chordal graphs play
an important role.
\end{abstract}

\maketitle

\section*{Introduction}

An $n \times m$ integer matrix $A = ({\bf a}_1, \ldots, {\bf a}_m)$ 
is called a {\it configuration} if there exists ${\bf c} \in \RR^n$ such that
${\bf a}_j \cdot {\bf c} = 1$ for $ 1 \le j \le m$.
Let $K[y_1,\ldots , y_m]$ be a polynomial ring in $m$ variables over a field $K$.
Given a configuration $A$, the binomial ideal
$$
I_A
=
\left<
\prod_{b_i >0}
y_i^{b_i}
-
\prod_{b_j <0}
y_j^{-b_j} 
\in
K[y_1, \ldots, y_m]: 
 {\bf b} = 
\begin{pmatrix}
b_1\\
\vdots\\
b_m
\end{pmatrix}
\in \ZZ^m , A {\bf b} = {\bf 0} 
\right>
$$
is called the {\em toric ideal} of $A$.
Any toric ideal is generated by homogeneous binomials,
and has a Gr\"obner basis consisting of homogeneous binomials.
See \cite{dojoEN, Sturmfels} for basics on toric ideals.
Each of the following is 
one of the most important and fundamental problems on toric ideals:
\begin{itemize}
\item[(a)]
Is the toric ideal $I_A$ generated by quadratic binomials?
\item[(b)]
Does there exist a monomial order such that
a Gr\"obner basis of $I_A$ consists of quadratic binomials?
\end{itemize}
Note that any Gr\"obner basis of $I_A$ is a set of generators of $I_A$.
These problems arise in the study of Koszul algebras.
The algebra $K[y_1,\ldots, y_m] / I_A$ is said to be {\em Koszul}
if the minimal graded free resolution of $K$ as a $K[y_1,\ldots, y_m] / I_A$-module is linear.
It is known that
\begin{eqnarray*}
I_A 
\mbox{ has a quadratic Gr\"obner basis }
&\Longrightarrow&
K[y_1,\ldots, y_m] / I_A
\mbox{ is Koszul }\\
&\Longrightarrow&
I_A \mbox{ is generated by quadratic binomials}
\end{eqnarray*}
holds in general.
However, all of the converse implications are false.
See, e.g., \cite{OH2}.
Problems (a) and (b) are studied for configurations
arising from various kinds of combinatorial objects.
The following is a partial list of them:
\begin{enumerate}
\item
Toric ideals arising from order polytopes of finite posets
\cite{HibiGor};

\item
Toric ideals arising from cut polytopes of finite graphs
\cite{Eng, NP};

\item
Toric ideals of the vertex-edge incidence matrix of finite graphs
\cite{OH1, OH2};

\item
Toric ideals arising from graphical models
\cite{D, GMS};

\item
Toric ideals arising from matroids
\cite{Bla, Blu, Kas, LM}.
\end{enumerate}
In particular, one of the most famous open problems on toric ideals 
is White's conjecture \cite{White}:
He conjectured that the toric ideal arising from 
any matroid is generated by some quadratic binomials.

In the present paper, we study toric ideals associated with multichains of posets.
Let $d \ge 2$ be an integer and let $P=\{x_1,\ldots, x_n\}$ be a poset.
We associate a multichain
$
C: x_{i_1} \le x_{i_2} \le \cdots \le x_{i_d}
$
of length $d-1$ with a (not necessarily $(0,1)$) vector
$
\rho(C) = {\bf e}_{i_1} + {\bf e}_{i_2} + \cdots + {\bf e}_{i_d} \in \ZZ^n
$,
where ${\bf e}_i$ is the $i$th unit vector in $\RR^n$.
We often regards $C$ as a multiset $\{x_{i_1}, x_{i_2}, \ldots, x_{i_d}\}$.
Let $\Pd = \{C_1,\ldots, C_m\}$ be a set of multichains of $P$ of length $d-1$.
Then the toric ideal $I_{\Pd}$ of $\Pd$ is 
the toric ideal of the configuration $(\rho(C_1),\ldots, \rho(C_m))$.
For example, if $d=3$ and $P = \{x_1, x_2, x_3\}$ is a poset whose maximal chains are $x_1 > x_2$ and $x_2 < x_3$,
then the corresponding configuration is
$$
\begin{pmatrix}
3 & 2  & 1   & 0 & 0 & 0 & 0\\
0 & 1  & 2   & 3 & 2 & 1 & 0\\
0 & 0  & 0   & 0 & 1 & 2 & 3
\end{pmatrix}.
$$
For any $d \ge 2$ and chain 
$P=\{x_1,\ldots, x_n\}$ of length $n-1$,
it is known that
$I_\Pd$ is the toric ideal of the 
{\it $d$th Veronese subring} of a polynomial ring in $n$ variables, and
$I_{\Pd}$ has a quadratic Gr\"obner basis.
Thus, in general, $I_{\Pd}$ is a toric ideal of a subconfiguration
of the $d$th Veronese subring.
There are several results on toric ideals of subconfigurations
of the $d$th Veronese subring:
algebras of Veronese type \cite[Theorem 14.2]{Sturmfels}
and
algebras of Segre--Veronese type \cite{OH2, AHOT}.
However, the results of the present paper are different from 
these results.
The toric ideal of algebras of Veronese / Segre--Veronese type 
has a squarefree initial ideal.
On the other hand, $I_{\Pd}$ has no squarefree initial ideal
except for some trivial cases (Proposition \ref{sqfnormal}).

This paper is organized as follows. 
In Section 1, it is shown that the comparability graph of a poset is chordal 
if and only if there exists a quadratic Gr\"obner basis of the toric ideal
of the poset (Theorem \ref{main}).
In order to construct a quadratic Gr\"obner basis,
the most difficult point is to find a suitable monomial order on a polynomial ring.
Strong perfect elimination orderings of strongly chordal graphs play
an important role in overcoming this difficulty.
In Section 2, 
we apply the results in Section 1 to a toric ring arising from a graph.
Given a graph $G$, let $A_G$ be the vertex-edge incidence matrix of $G$ and 
let $E_n$ be an identity matrix.
It is proved that the toric ideal of the configuration $(\ 2 E_n \ | \ A_G \ )$ has  a quadratic Gr\"obner basis
if and only if $G$ is strongly chordal (Theorem \ref{stronglychordal}).

\section{A Gr\"obner basis characterization}

In this section, we give the main theorem of this paper and its proof.
First we present a useful lemma.

\begin{Lemma}
\label{lemma1}
Let $A = ({\bf a}_1, \ldots, {\bf a}_m)$ be a configuration.
Suppose that
${\bf a}_{i_1} +\cdots + {\bf a}_{i_r} = {\bf a}_{j_1} +\cdots +{\bf a}_{j_r} $ $(r \ge 3)$,
where $\{i_1,\ldots, i_r\} \cap \{j_1,\ldots, j_r\} = \emptyset$.
If 
$$
{\bf a}_{i_k} +{\bf a}_{i_\ell} = {\bf a}_p + {\bf a}_q
\Longleftrightarrow
\{i_k, i_\ell\} = \{p,q\}
$$
holds for any $1 \le k < \ell \le r$ and $1 \le p, q \le m$,
then $I_A$ is not generated by quadratic binomials.
\end{Lemma}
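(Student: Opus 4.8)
The plan is to argue by contradiction, using the standard dictionary between binomial generating sets of a toric ideal and the ``moves'' connecting two monomials that lie in a common fibre. Write $M = y_{i_1}\cdots y_{i_r}$ and $N = y_{j_1}\cdots y_{j_r}$. Since ${\bf a}_{i_1}+\cdots+{\bf a}_{i_r} = {\bf a}_{j_1}+\cdots+{\bf a}_{j_r}$, the binomial $M-N$ belongs to $I_A$; and because the index sets $\{i_1,\dots,i_r\}$ and $\{j_1,\dots,j_r\}$ are disjoint and nonempty, $M$ and $N$ involve disjoint sets of variables, so $M\neq N$.

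Suppose for contradiction that $I_A$ is generated by quadratic binomials. Then, since $M-N\in I_A$, there is a chain of monomials $M=w_0,w_1,\dots,w_t=N$ in which each $w_{a+1}$ arises from $w_a$ by a single quadratic move: $w_a = z\,y_py_q$ and $w_{a+1}= z\,y_{p'}y_{q'}$ for some monomial $z$ and some indices $p,q,p',q'$ with $y_py_q - y_{p'}y_{q'}\in I_A$, so that in particular ${\bf a}_p+{\bf a}_q = {\bf a}_{p'}+{\bf a}_{q'}$. (This is the familiar fact that a set of binomials generates $I_A$ precisely when the associated moves connect every fibre; see e.g.\ \cite{Sturmfels}.) I would then prove that the only monomial reachable from $M$ by such moves is $M$ itself, which contradicts $w_t=N\neq M$. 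By induction along the chain it is enough to check that a single quadratic move applied to $M$ changes nothing: if $y_py_q$ divides $M = y_{i_1}\cdots y_{i_r}$, then $\{p,q\}=\{i_k,i_\ell\}$ (as multisets) for some $1\le k<\ell\le r$, and the equality ${\bf a}_p+{\bf a}_q = {\bf a}_{p'}+{\bf a}_{q'}$ reads ${\bf a}_{i_k}+{\bf a}_{i_\ell} = {\bf a}_{p'}+{\bf a}_{q'}$; the hypothesis of the lemma then forces $\{p',q'\}=\{i_k,i_\ell\}=\{p,q\}$, hence $y_{p'}y_{q'}=y_py_q$ and $w_{a+1}=w_a$.

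Thus the combinatorial core of the argument --- that every degree-two move available on $M$ is trivial --- is immediate from the hypothesis, and the restriction $r\ge 3$ is exactly what keeps the hypotheses of the lemma consistent with the existence of the degree-$r$ relation $M-N$. The one place that needs care is the Markov-basis input above: one has to make sure that each reduction step is carried out on a genuine submonomial of the current monomial, so that the degree-two binomial is really matched against a pair $\{i_k,i_\ell\}$ drawn from the support of $M$. I expect this bookkeeping, rather than any new idea, to be the main obstacle.
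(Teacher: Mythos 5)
Your argument is correct, and its combinatorial core is exactly the paper's: any nonzero quadratic binomial of $I_A$ one of whose monomials divides $y_{i_1}\cdots y_{i_r}$ must have its dividing monomial equal to $y_{i_k}y_{i_\ell}$ for some $k<\ell$, and the hypothesis then forces the binomial to vanish. The difference is only in the wrapper: you invoke the fiber-connectivity (Markov basis) characterization of binomial generating sets and then show by induction along the chain of quadratic moves that nothing can move off $M=y_{i_1}\cdots y_{i_r}$, whereas the paper avoids that machinery entirely with a one-step observation --- if $f=y_{i_1}\cdots y_{i_r}-y_{j_1}\cdots y_{j_r}$ lies in the ideal generated by quadratic binomials of $I_A$, then in any expression $f=\sum_t h_t g_t$ some quadratic $g_t\neq 0$ must have a monomial dividing $y_{i_1}\cdots y_{i_r}$, and that single $g_t$ is already killed by the hypothesis (since $g_t\in I_A$ gives ${\bf a}_{i_k}+{\bf a}_{i_\ell}={\bf a}_p+{\bf a}_q$, hence $\{i_k,i_\ell\}=\{p,q\}$ and $g_t=0$). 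Your route buys nothing extra here but costs you the appeal to the connectivity theorem and the bookkeeping about moves acting on genuine submonomials that you yourself flag; the paper's representation argument needs only elementary comparison of monomials in a sum. Both are complete proofs, and your handling of possible repeated indices (reading $\{p,q\}=\{i_k,i_\ell\}$ as multisets, which the hypothesis accommodates since it is stated for all positions $k<\ell$) is fine.
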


\begin{proof}
Suppose that
${\bf a}_{i_1} +\cdots + {\bf a}_{i_r} = {\bf a}_{j_1} +\cdots +{\bf a}_{j_r} $ $(r \ge 3)$,
where $\{i_1,\ldots, i_r\} \cap \{j_1,\ldots, j_r\} = \emptyset$.
Then, $A ({\bf e}_{i_1} +\cdots + {\bf e}_{i_r} - {\bf e}_{j_1} -\cdots -{\bf e}_{j_r} )={\bf 0}$, and hence, $y_{i_1} \cdots y_{i_r} - y_{j_1} \cdots y_{j_r} $ belongs to $I_A$.
Let $f = y_{i_1} \cdots y_{i_r} - y_{j_1} \cdots y_{j_r} $.
Since $\{i_1,\ldots, i_r\} \cap \{j_1,\ldots, j_r\} = \emptyset$, 
$f$ is a nonzero binomial (of degree $r \ge 3$).
If $f$ is generated by quadratic binomials in $I_A$,
then there exists a quadratic binomial $g = y_{i_k} y_{i_\ell}  - y_p y_q$ $(\neq 0)$
belonging to $I_A$ such that
$1 \leq k< \ell \le r$ and $1 \le p,q \le m$.
Since $g$ belongs to $I_A$, we have ${\bf a}_{i_k} +{\bf a}_{i_\ell} = {\bf a}_p + {\bf a}_q$.
By assumption, $\{i_k, i_\ell\} = \{p,q\}$, and hence $g=0$.
This is a contradiction.
Thus, $f$ is not generated by quadratic binomials in $I_A$.
\end{proof}

Let $G$ be a finite simple graph on the vertex set $\{v_1,\ldots,v_n\}$
whose edge set is $E(G)$. 
Given a vertex $v$ of $G$,
let $N(v)$ denote the induced subgraph of $G$
consisting of all vertices adjacent to $v$.
A vertex $v$ of $G$ is called {\it simplicial} in $G$
if $N(v)$ is a clique in $G$.
The ordering $v_1,\ldots,v_n$ of the vertices of $G$
is called a {\it perfect elimination ordering} of $G$
if, for all $i \in \{1,2,\ldots, n-1\}$, the vertex $v_i$ is simplicial in the induced subgraph of $G$
on vertices $v_i, v_{i+1},\ldots,v_n$.
A graph $G$ is called {\it chordal} if
the length of any induced cycle of $G$ is three.
It is known that a graph $G$ is chordal if and only if 
$G$ has a perfect elimination ordering. 
Several interesting results on commutative algebra
related with chordal graphs are known (e.g., \cite{D, F, HHZ}).
A perfect elimination ordering $v_1,\ldots,v_n$ of a graph $G$
 is called a {\it strong perfect elimination ordering}
 if one of the following equivalent conditions holds: 
\begin{itemize}
\item[(i)]
If $i<j<k< \ell$ and $\{v_i , v_k\}, \{v_i, v_\ell\}, \{v_j, v_k\} \in E(G)$,
then $\{v_j, v_\ell\} \in E(G)$;
\item[(ii)]
If $i<j$ and $k< \ell$ with
$
\{v_i , v_k\}, \{v_i, v_\ell\}, \{v_j, v_k\}\! \in \! E(G)$, then
$\{v_j, v_\ell\}\! \in \! E(G)$.
\end{itemize}
A graph $G$ is called {\it strongly chordal}
if $G$ has a strong perfect elimination ordering.

Let $P=\{ x_1, \ldots , x_n \}$ be a poset.
Then, the {\em comparability graph} $G_P$ of $P$ 
is a graph on the vertex set $P$ such that
$\{x_i,x_j\}$ is an edge of $G_P$ 
if and only if $x_i < x_j$ or $x_j < x_i$.
It is known that if $G_P$ is chordal, then $G_P$ is strongly chordal.
See, e.g., \cite{graphbook} for details.
Suppose that  the comparability graph $G_P$ of a poset $P$ is chordal.
Assume that $x_1, \ldots , x_n$ is a strong perfect elimination ordering of $G_P$.
Let $\Pd = \{C_1, \ldots, C_m\}$, where $\rho(C_i)- \rho(C_j)
 = (0,\ldots, 0, \alpha^{(i,j)}, \ldots)$ with $\alpha^{(i,j)} >0$
for all $1 \le i<j \le m$.
Recall that $I_\Pd \subset K[y_1, \ldots, y_m]$ is the toric ideal of a configuration $(\rho(C_1), \ldots, \rho(C_m))$,
where each $y_i$ corresponds to $C_i$.
Let $<_{\rm rev}$ denote the reverse lexicographic order
induced by the ordering $y_1 < \cdots < y_m$.

Now we are in the position to state the main theorem of the present paper.

\begin{Theorem}
\label{main}
Let $P$ be a poset.
Then the following conditions are equivalent:
\begin{itemize}
\item[{\rm (i)}]
The comparability graph $G_P$ of $P$ is chordal.
\item[{\rm (ii)}]
The toric ideal $I_{\Pd}$ is generated by quadratic binomials for some $d$;
\item[{\rm (iii)}]
The toric ideal $I_{\Pd}$ has a quadratic Gr\"obner basis for some $d$;
\item[{\rm (iv)}]
The toric ideal $I_{\Pd}$ is generated by quadratic binomials for all $d \ge 2$;
\item[{\rm (v)}]
The toric ideal $I_{\Pd}$ has a quadratic Gr\"obner basis for all $d \ge 2$.
\end{itemize}
\end{Theorem}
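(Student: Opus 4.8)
The plan is to establish the cycle of implications $\mathrm{(i)} \Rightarrow \mathrm{(v)} \Rightarrow \mathrm{(iv)} \Rightarrow \mathrm{(ii)} \Rightarrow \mathrm{(i)}$, together with the trivial $\mathrm{(v)} \Rightarrow \mathrm{(iii)} \Rightarrow \mathrm{(ii)}$. The implications $\mathrm{(v)} \Rightarrow \mathrm{(iv)}$, $\mathrm{(v)} \Rightarrow \mathrm{(iii)}$, $\mathrm{(iv)} \Rightarrow \mathrm{(ii)}$ and $\mathrm{(iii)} \Rightarrow \mathrm{(ii)}$ are formal, since a Gr\"obner basis is a system of generators and a property holding for all $d \ge 2$ holds for some $d$. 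Thus the two substantial points are $\mathrm{(ii)} \Rightarrow \mathrm{(i)}$ and $\mathrm{(i)} \Rightarrow \mathrm{(v)}$.

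For $\mathrm{(ii)} \Rightarrow \mathrm{(i)}$ I argue contrapositively, the tool being Lemma \ref{lemma1}. Assume $G_P$ is not chordal and pick an induced cycle of length $\ge 4$. A comparability graph has no induced odd cycle of length $\ge 5$: along such a cycle $x_{a_i} < x_{a_{i+1}} < x_{a_{i+2}}$ would force the chord $\{x_{a_i}, x_{a_{i+2}}\}$, so the comparabilities must alternate in direction around the cycle, which is impossible for odd length. Hence the cycle has even length $2k$, say $x_{a_1} - x_{a_2} - \cdots - x_{a_{2k}} - x_{a_1}$, with vertices alternating between minimal and maximal elements of the induced subposet on these vertices. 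If $k \ge 3$, I split the cycle into its two perfect matchings and pad each cycle edge $\{x_{a_i}, x_{a_{i+1}}\}$ with $d-2$ extra copies of its smaller endpoint; a short computation (using that each vertex is the smaller endpoint of either both or neither of its two cycle edges) shows the two matchings then yield two families of $k$ multichains of length $d-1$ with a common underlying multiset, hence a nonzero binomial $f \in I_\Pd$ of degree $k \ge 3$ whose two sides involve disjoint sets of multichains. If $k = 2$, the induced cycle is a crown $x_{a_1} < x_{a_2} > x_{a_3} < x_{a_4} > x_{a_1}$, and I use instead the degree-$3$ binomial coming from the two triples
$$
\{x_{a_3}^{d}\},\quad \{x_{a_1}, x_{a_2}^{d-1}\},\quad \{x_{a_1}^{d-1}, x_{a_4}\}
\qquad\text{and}\qquad
\{x_{a_1}^{d}\},\quad \{x_{a_2}^{d-1}, x_{a_3}\},\quad \{x_{a_3}^{d-1}, x_{a_4}\},
$$
which have the common underlying multiset $\{x_{a_1}^{d}, x_{a_2}^{d-1}, x_{a_3}^{d}, x_{a_4}\}$ and involve six distinct multichains. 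In both cases, applying Lemma \ref{lemma1} amounts to checking its rigidity hypothesis: each pairwise sum of multichains on one side is supported on at most four vertices of the induced cycle, among which the only comparable pairs are cycle edges (forming a path or two disjoint edges); because the cycle is chordless, the multiplicity pattern of the sum forces the decomposition into two chains to be unique. Lemma \ref{lemma1} then shows that $I_\Pd$ is not generated by quadratic binomials for any $d \ge 2$, which negates $\mathrm{(ii)}$ (and $\mathrm{(iv)}$).

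For $\mathrm{(i)} \Rightarrow \mathrm{(v)}$, assume $G_P$ is chordal; then $G_P$ is strongly chordal, and I fix a strong perfect elimination ordering $x_1 < \cdots < x_n$ and the reverse lexicographic order $<_{\rm rev}$ set up above. The plan is to write down an explicit finite set $\mathcal{G}$ of quadratic binomials of $I_\Pd$ and prove it is a Gr\"obner basis for $<_{\rm rev}$. The members of $\mathcal{G}$ are ``sorting relations'': given chains $C, C'$ of $P$, one runs through the multiset $C \uplus C'$ in the order $x_1 < \cdots < x_n$, and whenever two incomparable elements are met the strong elimination property is invoked to redistribute them, producing chains $D, D'$ with $D \uplus D' = C \uplus C'$ and $(D,D')$ ``sorted''; when $(C,C')$ is not already sorted one puts $y_C y_{C'} - y_D y_{D'} \in \mathcal{G}$, with leading term $y_C y_{C'}$. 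To prove $\mathcal{G}$ is a Gr\"obner basis one shows that the monomials divisible by no leading term of $\mathcal{G}$ — the products of pairwise sorted multichains — have pairwise distinct multidegrees; equivalently, that every binomial of $I_\Pd$ is reducible modulo $\mathcal{G}$, which can be organized as a verification that all S-pairs among elements of $\mathcal{G}$ reduce to zero. Since every element of $\mathcal{G}$ is quadratic, this yields $\mathrm{(v)}$, and the proof is complete.

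The step I expect to be the real obstacle is exactly this Gr\"obner-basis verification, and more precisely the confluence of the sorting operation: one must show that resolving incomparabilities in different orders leads to the same sorted pair, and that pairwise sortedness of all the multichains occurring in a monomial determines its multidegree. This is where condition (i) (equivalently (ii)) in the definition of a strong perfect elimination ordering is indispensable: it guarantees that the ``three edges force the fourth'' configurations arising when incomparable elements are untangled can always be completed, so that the sorting procedure never gets stuck and does not depend on the choices made. Once this is in place, everything else in Theorem \ref{main} reduces to the two arguments above.
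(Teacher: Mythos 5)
Your implication (ii) $\Rightarrow$ (i) is essentially sound and close in spirit to the paper: you produce, from an induced even cycle, a relation of degree $\ge 3$ among multichains supported on the cycle and invoke Lemma \ref{lemma1}; your padded-matching relation (and the separate crown relation for the $4$-cycle) differs only cosmetically from the paper's explicit family $C_1,\ldots,C_{2\ell+2}$, and the rigidity check you sketch does go through because all supports lie on the chordless cycle. The parity remark (no induced odd cycles of length $\ge 5$ in a comparability graph) matches the paper as well.

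The genuine gap is in (i) $\Rightarrow$ (v), which is the heart of the theorem, and you have only a plan there, not a proof. You never define the ``sorting'' operation: saying that when two incomparable elements are met ``the strong elimination property is invoked to redistribute them'' does not specify a map $(C,C') \mapsto (D,D')$, so one cannot check that $D$ and $D'$ are multichains, that $y_C y_{C'}$ really is the $<_{\rm rev}$-leading term of $y_C y_{C'} - y_D y_{D'}$ for the stated variable order, or that the procedure terminates. More importantly, the statement your strategy hinges on --- that the sorting is confluent and that products of pairwise ``sorted'' multichains have pairwise distinct multidegrees --- is exactly the nontrivial content of (i) $\Rightarrow$ (v), and you explicitly defer it (``the step I expect to be the real obstacle''). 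Without it nothing is proved: the reduction to S-pair computations or to uniqueness of sorted monomials in each fiber is a correct general scheme, but carrying it out for an arbitrary chordal comparability graph and arbitrary $d$ is where the strong perfect elimination ordering must actually be used, and no argument is given. Note that the paper avoids constructing an explicit basis altogether: it orders the multichains lexicographically via a strong perfect elimination ordering, takes the \emph{reduced} Gr\"obner basis for the induced reverse lexicographic order, and shows that any element of degree $\ge 3$ would admit a quadratic binomial of $I_{\Pd}$ whose initial monomial divides its initial monomial (built using simpliciality of the smallest vertex and condition (ii) of the strong ordering), contradicting reducedness. Adopting that indirect argument, or else fully specifying your sorting map and proving the distinct-multidegree claim, is what is still needed to close the proof.
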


\begin{proof}
The implications
(v) $\Longrightarrow$ (iv)$\Longrightarrow$ (ii)
and
(v) $\Longrightarrow$ (iii)$\Longrightarrow$ (ii)
are trivial.
We will show (ii) $\Longrightarrow$ (i) and (i) $\Longrightarrow$ (v).

{\bf (ii) $\Longrightarrow$ (i)}
Suppose that $G_P$ has an induced cycle $C$ of length $\ge 4$.
Since $G_P$ is a comparability graph, $C$ is an even cycle.
Let $C=(x_1,\ldots, x_{2 \ell})$ with $\ell \ge 2$
and let $C_1 = \{x_1, \ldots, x_1\}$, $C_2 =\{x_1,\ldots,x_1, x_2\}$,
$C_3 =\{x_2,x_3, \ldots, x_3\}$, 
$C_4 =\{x_3, \ldots, x_3\}$,
$C_{2i+1}= \{x_{2i-1},x_{2i}, \ldots, x_{2i}\}$ $ (2 \le i \le \ell) $, 
$C_{2i+2}= \{x_{2i}, \ldots, x_{2i},x_{2i+1}\}$ $(2\le i \le \ell-1)$, and 
$C_{2\ell +2} =\{x_{2\ell},\ldots,x_{2 \ell}, x_1\}$
be multichains in $\Pd$.
Then, we have
\begin{eqnarray*}
\sum_{k=1}^{\ell+1} \rho( C_{2k-1} )
&=& 
d {\bf e}_1 + {\bf e}_2 + (d-1) {\bf e}_3
+
\sum_{i=2}^{\ell}
({\bf e}_{2i-1} + (d-1) {\bf e}_{2i})\\
&=& 
(d-1) {\bf e}_1 + {\bf e}_2 + d {\bf e}_3
+
\left(
\sum_{i=2}^{\ell-1}
((d-1) {\bf e}_{2i} + {\bf e}_{2i+1})
\right)
+ (d-1) {\bf e}_{2\ell} + {\bf e}_1
\\
&=& \sum_{k=1}^{\ell+1} \rho( C_{2k} ) .
\end{eqnarray*}
Since $C$ is an induced cycle of $G_P$,
it follows that,
for 
$C', C'' \in \Pd$, $\rho(C_i) + \rho(C_j) = \rho(C') + \rho(C'')$
if and only if
$ \{C_i , C_j\}=\{C', C'' \}$.
By Lemma \ref{lemma1}, $I_\Pd$ is not generated by quadratic binomials.

{\bf (i) $\Longrightarrow$ (v)}
Let ${\mathcal G}$  be the reduced Gr\"obner basis of $I_{\Pd}$ with respect to 
the reverse lexicographic order $<_{\rm rev}$ defined as above.
Suppose that there exists a binomial $g = y_{u_1} \cdots y_{u_\beta} -y_{v_1} \cdots y_{v_\beta} \in {\mathcal G}$ of degree $\beta \ge 3$
whose initial monomial is $y_{u_1} \cdots y_{u_\beta}$.
By \cite[Lemma 4.6]{Sturmfels}, it follows that
$\{u_1,\ldots, u_\beta\} \cap \{v_1,\ldots, v_\beta\} = \emptyset$.
Since $g$ belongs to $I_\Pd$, we have $\sum_{i=1}^\beta \rho (C_{u_i}) = \sum_{i=1}^\beta \rho (C_{v_i}) $.
Let $y_k$ be the smallest variable in $g$.
Then, $k \in \{v_1,\ldots, v_\beta\}$.
Let $C_k = \{x_{i_1}, \ldots, x_{i_d}\}$, where $i_1 \le \cdots \le i_d$.

Since $- \rho(C_k) + \sum_{i=1}^\beta \rho (C_{u_i}) $ is nonnegative 
and since $y_k$ is the smallest variable in $g$,
there exists a variable $y_{k_1}$ such that
$
C_{k_1} = \{x_{i_1}, \ldots x_{i_p}, x_{\ell_1},\ldots, x_{\ell_q} \}
$
$
(i_1 \le \cdots \le i_p \le  \ell_1 \le \cdots \le \ell_q,
\ \ 1 \le p <d, \mbox{ and } i_{p+1} <\ell_1)
$
and that $k_1 \in \{u_1,\ldots, u_\beta\} $.
Since $x_{i_1}$ is simplicial, 
it follows that
$P' = \{x_{i_1},\ldots, x_{i_d} , x_{\ell_1},\ldots, x_{\ell_q} \}$ is a multichain of $P$.
On the other hand, since $-\rho(C_{k_1})- {\bf e}_{i_{p+1}} +\sum_{i=1}^\beta \rho (C_{u_i}) $
is nonnegative, 
there exists a variable $y_{k_2}$ such that
$C_{k_2} =
\{x_{i_{p+1}}, x_{ j_2}, \ldots ,x_{j_d} \}
$
$
(j_2 \le \cdots \le j_d)
$
and that $y_{k_1} y_{k_2}$ divides $y_{u_1} \cdots y_{u_\beta}$.
Since $y_k <y_{k_2}$, we have
$i_1 \le i_{p+1}, j_2$, and in addition,
at least one of $i_{p+1}$ and $j_d$ is greater than $i_1$.
It is enough to show that there exists a quadratic binomial
$f$ ($\neq 0$) in $I_{\Pd}$ whose initial monomial is
$y_{k_1} y_{k_2}$,
which yields a contradiction.

\noindent
{\bf Case 1.} ($i_1 \in \{i_{p+1}, j_2\}$.)
Since $x_{i_1}$ is simplicial and since
$$i_1 = \min\{i_1, \ldots, i_p , \ell_1, \ldots, \ell_q \} = \min\{ i_{p+1}, j_2,\ldots, j_d \},$$
$\{x_{i_1},\ldots, x_{i_p} , x_{\ell_1},\ldots, x_{\ell_q} \} \cup \{ x_{i_{p+1}}, x_{j_2}, \ldots,  x_{j_d} \}$ 
is a multichain of $P$.
Let $\alpha_1 ,\ldots, \alpha_{2d}$ be integers such that
$\alpha_1 \le \cdots \le \alpha_{2d}
$
and
$$
\{x_{i_1}, \ldots, x_{i_p}, x_{\ell_1}, \ldots, x_{\ell_q} , x_{i_{p+1}}, x_{j_2}, \ldots, x_{j_d} \}
=
\{x_{\alpha_1}, \ldots, x_{\alpha_{2d}}\}
$$
as multisets.
Then, $f= y_{k_1} y_{k_2} - y_{k_3} y_{k_4}$,
where 
$C_{k_3} = \{x_{\alpha_1}, \ldots, x_{\alpha_d}\}$ and
$C_{k_4} = \{x_{\alpha_{d+1}}, \ldots, x_{\alpha_{2d}}\}$,
belongs to $I_{\Pd}$.
Suppose that $f =0$.
Then, either $i_1 = \cdots = i_p = \ell_1 = \cdots =\ell_q$ or 
$i_1 = i_{p+1} = j_2 = \cdots = j_d$.
Since $i_p \le i_{p+1} < \ell_1$, we have $i_1 = i_{p+1} = j_2 = \cdots = j_d$.
This contradicts the fact that 
at least one of $i_{p+1}$ and $ j_d$ is greater than $i_1$.
Thus, $f$ is nonzero.
Since the smallest variable appearing in $f$ is $y_{k_3} $,
the initial monomial of $f$ is $y_{k_1} y_{k_2}$.


\noindent
{\bf Case 2.} ($i_1 \notin \{  i_{p+1}, j_2\}$.)
Suppose that $i_{p+1} = j_2 = \cdots = j_d$.
Since 
$i_1 < i_{p+1} = j_2 = \cdots = j_d < \ell_1$, it follows that
$
f = 
y_{k_1} y_{k_2}
-  
y_{k_3} y_{k_4}
\in 
I_{\Pd}
$
is nonzero, where
$C_{k_3} = \{x_{i_1}, \ldots, x_{i_p}, x_{i_{p+1}}, \ldots, x_{i_{p+1}}\}$ and
$C_{k_4} = \{x_{i_{p+1}}, \ldots, x_{i_{p+1}}, x_{\ell_1}, \ldots, x_{\ell_q}\}$.
Since the smallest variable appearing in $f$ is $y_{k_3} $,
the initial monomial of $f$ is $y_{k_1} y_{k_2}$.
Therefore, there exists $j_s$ such that $i_{p+1} \neq j_s$.

Suppose that $j_s = \ell_1 = \cdots = \ell_q$.
Since $i_{p+1} < \ell_1$ and $i_1 < j_2$, it follows that
$
f = 
y_{k_1} y_{k_2}
-  
y_{k_3} y_{k_4}
\in 
I_{\Pd}
$
is nonzero, where
$C_{k_3} = \{x_{i_1}, \ldots, x_{i_p}, x_{i_{p+1}}, x_{\ell_1}, \ldots, x_{\ell_1}\}$ and
$C_{k_4} = \{x_{\ell_1}, x_{j_2}, \ldots, x_{j_d}\}$.
Since the smallest variable appearing in $f$ is $y_{k_3} $,
the initial monomial of $f$ is $y_{k_1} y_{k_2}$.
Therefore, there exists $\ell_t$ such that $\ell_t \neq j_s$.

Thus, $i_1$, $i_{p+1}$, $j_s$, and $\ell_t$ are distinct integers such that
$i_1 < j_s$, $i_{p+1} < \ell_t$,
and that
$\{x_{i_1} , x_{i_{p+1}} \}$,
$\{x_{i_1} , x_{\ell_t} \}$, and
$\{x_{i_{p+1}} , x_{j_s} \}$ are edges of $G_P$.
Since $x_1, \ldots, x_n$ is a strong perfect elimination ordering,
$\{ x_{j_s}, x_{\ell_t}  \}$ is an edge of $G_P$.
We now show that, for any $2 \le s' \le d$, 
$x_{j_{s'}}$ and $x_{\ell_t}$ are comparable.
We may assume that $ \ell_t \neq j_{s'} $.
If 
$j_{s'} = i_{p+1}$, then $x_{j_{s'}}$ and $x_{\ell_t}$ are comparable
since $P'$ is a multichain.
If $j_{s'} \neq i_{p+1}$, then $x_{j_{s'}}$ and $x_{\ell_t}$ are comparable by the same argument of $j_s$.
Thus, $x_{j_{s'}}$ and $x_{\ell_t}$ are comparable, and hence $\{x_{\ell_t}, x_{j_2}, \ldots, x_{j_d}\}$ is a multichain of $P$.
Since $i_{p+1} \neq \ell_t$ and $i_1 < j_2$, it follows that
$
f = 
y_{k_1} y_{k_2}
-  
y_{k_3} y_{k_4}
\in 
I_{\Pd}
$
is nonzero, where
$C_{k_3} = \{x_{i_1}, \ldots, x_{i_p}, x_{i_{p+1}}, x_{\ell_1}, \ldots, x_{\ell_{t-1}}, x_{\ell_{t+1}},\ldots, x_{\ell_q}\}$ and
$C_{k_4} = \{x_{\ell_t}, x_{j_2}, \ldots, x_{j_d}\}$.
Since the smallest variable appearing in $f$ is $y_{k_3} $,
the initial monomial of $f$ is $y_{k_1} y_{k_2}$.
\end{proof}

Let $A$ be a configuration.
The {\it initial ideal} of $I_A$ is an ideal generated by the initial monomial of 
the nonzero polynomials in $I_A$.
It is known that, if $I_A$ has a squarefree initial ideal, then $K[y_1,\ldots,y_m]/I_A$ is normal.
See \cite{dojoEN, Sturmfels}.
The toric ideals of
algebras of Veronese type \cite[Theorem 14.2]{Sturmfels}
and
algebras of Segre--Veronese type \cite{OH2, AHOT}
have a squarefree initial ideal.
On the other hand, $I_{\Pd}$ has no squarefree initial ideal
except for some trivial cases.
Let $P$ be a poset with ${\mathcal  M}_d(P) = \{C_1,\ldots, C_m\}$, and let
\begin{eqnarray*}
{\mathbb Z}A_P &=& \left\{ \sum_{i=1}^m z_i \rho(C_i) : z_i \in {\mathbb Z} \right\},\\
{\mathbb Z}_{\ge 0} A_P &=& \left\{ \sum_{i=1}^m z_i \rho(C_i) : 0 \le z_i \in {\mathbb Z} \right\},\\
{\mathbb Q}_{\ge 0} A_P &=&\left\{ \sum_{i=1}^m q_i \rho(C_i) : 0 \le q_i \in {\mathbb Q} \right\}.
\end{eqnarray*}
It is known \cite[Theorem 13.5]{Sturmfels} that 
$K[y_1,\ldots, y_m]/I_{{\mathcal  M}_d(P)}$ is normal
if and only if ${\mathbb Z}_{\ge 0} A_P = {\mathbb Z} A_P \cap {\mathbb Q}_{\ge 0} A_P$.
Proposition \ref{sqfnormal} shows that $I_{{\mathcal  M}_d(P)}$ 
is different from known classes of toric ideals 
associated with subconfigurations of Veronese subrings.

\begin{Proposition}
\label{sqfnormal}
Let $P$ be a poset.
Then, the following conditions are equivalent:

\begin{enumerate}
\item[(i)]
$P$ is a disjoint union of chains;

\item[(ii)]
$I_{{\mathcal  M}_d(P)}$ is the toric ideal of the
tensor product of the Veronese subrings;

\item[(iii)]
$I_{{\mathcal  M}_d(P)}$ has a squarefree initial ideal;

\item[(iv)]
$K[y_1,\ldots, y_m]/I_{{\mathcal  M}_d(P)}$ is normal.

\end{enumerate}
Moreover, the normalization of $K[y_1,\ldots, y_m]/I_{{\mathcal  M}_d(P)}$ 
is the tensor product of the Veronese subrings.
\end{Proposition}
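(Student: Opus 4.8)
The plan is to pin down, for an \emph{arbitrary} poset $P$ and arbitrary $d\ge 2$, the lattice ${\mathbb Z}A_P$ and the cone ${\mathbb Q}_{\ge 0}A_P$ attached to the configuration $A_P=(\rho(C_1),\dots,\rho(C_m))$, and then to read everything off from these. Write $V_1,\dots,V_s$ for the vertex sets of the connected components of $G_P$. The starting observation is that a multichain of $P$, viewed as a set, is a chain of $P$, hence its support lies in a single $V_i$; thus every column of $A_P$ is supported on one $V_i$, and each has coordinate sum $d$. First I would record two identities. (a)~${\mathbb Q}_{\ge 0}A_P={\mathbb Q}_{\ge 0}^n$: the constant multichains $\{v,\dots,v\}$ contribute the columns $d\,{\bf e}_v$ for every vertex $v$, so the generated cone is already the whole nonnegative orthant. (b)~${\mathbb Z}A_P=\bigoplus_{i=1}^{s}\{\,{\bf w}\in{\mathbb Z}^{V_i}:\sum_{v\in V_i}w_v\equiv 0\pmod{d}\,\}$: the inclusion ``$\subseteq$'' is clear from the support statement together with the coordinate sum $d$ of each column, while for ``$\supseteq$'' it suffices that $d\,{\bf e}_v\in{\mathbb Z}A_P$ for every $v$ and that ${\bf e}_u-{\bf e}_v\in{\mathbb Z}A_P$ whenever $u,v$ are comparable in $P$ --- for $u<v$ one has $\rho(\{u,v,\dots,v\})-\rho(\{v,\dots,v\})={\bf e}_u-{\bf e}_v$ (here $d\ge 2$ is used) --- hence, by transitivity along a path in $G_P$, ${\bf e}_u-{\bf e}_v\in{\mathbb Z}A_P$ for all $u,v$ in a common $V_i$.

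Granting (a) and (b) and using that ${\mathbb Z}^n=\bigoplus_i{\mathbb Z}^{V_i}$ splits along coordinate blocks, we get ${\mathbb Z}A_P\cap{\mathbb Q}_{\ge 0}A_P=\bigoplus_{i=1}^{s}S_i$, where $S_i=\{{\bf w}\in{\mathbb Z}_{\ge 0}^{V_i}:\sum_{v\in V_i}w_v\equiv 0\pmod{d}\}$ is the affine semigroup of the $d$th Veronese subring of $K[t_v:v\in V_i]$. Since the $V_i$ are pairwise disjoint coordinate sets, $K[{\mathbb Z}A_P\cap{\mathbb Q}_{\ge 0}A_P]$ is the tensor product over $K$ of these Veronese subrings, and by the standard description of the normalization of an affine semigroup ring as the semigroup ring of its saturation (see \cite{dojoEN,Sturmfels}, cf.\ \cite[Theorem 13.5]{Sturmfels}) this is the normalization of $K[y_1,\dots,y_m]/I_{\Pd}$. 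This already proves the ``Moreover'' assertion, independently of the equivalences.

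For the equivalences I would close the loop (i) $\Rightarrow$ (ii) $\Rightarrow$ (iii) $\Rightarrow$ (iv) $\Rightarrow$ (i). (i) $\Rightarrow$ (ii): if $P$ is a disjoint union of chains, each $V_i$ is a chain and the multichains of $P$ supported on $V_i$ are exactly the degree-$d$ monomials of $K[t_v:v\in V_i]$, which generate $S_i$; disjointness of the coordinate sets then shows that $I_{\Pd}$ is the toric ideal of $\bigotimes_i(\text{$d$th Veronese subring of }K[t_v:v\in V_i])$. (ii) $\Rightarrow$ (iii): the toric ideal of a $d$th Veronese subring has a squarefree initial ideal by \cite[Theorem 14.2]{Sturmfels}, and if toric ideals in disjoint sets of variables each have a squarefree initial ideal, then the union of their Gr\"obner bases is a Gr\"obner basis of the sum --- the only new $S$-pairs have coprime leading terms and so reduce to $0$ --- whence the tensor product again has a squarefree initial ideal. (iii) $\Rightarrow$ (iv) is the quoted fact that a squarefree initial ideal forces normality \cite{dojoEN,Sturmfels}.

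Finally (iv) $\Rightarrow$ (i), contrapositively: if $P$ is not a disjoint union of chains, then some $P|_{V_i}$ is not a chain, so there are incomparable $a,c\in V_i$; set ${\bf w}={\bf e}_a+(d-1){\bf e}_c$. By (a), ${\bf w}\in{\mathbb Q}_{\ge 0}A_P$, and by (b), ${\bf w}=({\bf e}_a-{\bf e}_c)+d\,{\bf e}_c\in{\mathbb Z}A_P$. But ${\bf w}\notin{\mathbb Z}_{\ge 0}A_P$: its coordinate sum is $d$, equal to that of any single column, so membership would force ${\bf w}=\rho(C)$ for one $C\in\Pd$, and such a $C$ would be the multichain with support $\{a,c\}$, contradicting the incomparability of $a$ and $c$. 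Hence ${\mathbb Z}_{\ge 0}A_P\ne{\mathbb Z}A_P\cap{\mathbb Q}_{\ge 0}A_P$ and $K[y_1,\dots,y_m]/I_{\Pd}$ is not normal by \cite[Theorem 13.5]{Sturmfels}. The only real care needed, I expect, is in this last step and in (b): one must use that ``multichain'' genuinely forces the support to be a chain of $P$ (so that no column can be supported on an incomparable pair), while ``lying in a common component of $G_P$'' is precisely the hypothesis that puts ${\bf e}_a-{\bf e}_c$ into ${\mathbb Z}A_P$; the cone and lattice computations themselves are routine.
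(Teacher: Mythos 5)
Your proof is correct and takes essentially the same route as the paper: compute ${\mathbb Z}A_P$ and ${\mathbb Q}_{\ge 0}A_P$ (the latter being the full nonnegative orthant via the constant multichains), use the witness ${\bf e}_a+(d-1){\bf e}_c$ for an incomparable pair in a common component to violate the normality criterion of \cite[Theorem 13.5]{Sturmfels}, and identify the normalization with the tensor product of Veronese subrings. The only differences are organizational: you carry the component decomposition through the lattice computation (stating the lattice correctly with the ``$\equiv 0 \pmod d$'' condition) rather than first reducing to connected $P$, and you spell out the implication (ii) $\Rightarrow$ (iii) that the paper simply cites as known.
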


\begin{proof}
The implication
(ii) $\Longrightarrow$ (iii) $\Longrightarrow$ (iv)
is known.
We will show 
(i) $\Longrightarrow$ (ii)
and 
(iv) $\Longrightarrow$ (i).
If $P$ is a disjoint union of posets $P_1$ and 
$P_2$,
then ${\mathcal  M}_d(P)$
is the disjoint union of $ {\mathcal  M}_d(P_1) = \{C_1,\ldots, C_\ell\}$
and $ {\mathcal  M}_d(P_2) = \{C_{\ell+1},\ldots, C_m\}$.
It then follows that
$K[y_1,\ldots, y_m]/I_{{\mathcal  M}_d(P)}$ is the tensor product
of $K[y_1,\ldots, y_\ell]/I_{{\mathcal  M}_d(P_1)}$
and $K[y_{\ell+1},\ldots, y_m]/I_{{\mathcal  M}_d(P_2)}$.
Thus, we may assume that $P$ is connected,
i.e., the comparability graph $G$ of $P$ is connected.

{\bf (i) $\Longrightarrow$ (ii)}
Suppose that $P$ is a chain.
Then, $I_{{\mathcal  M}_d(P)}$ is the toric ideal of the $d$-th Veronese subrings.


{\bf (iv) $\Longrightarrow$ (i)}
Since $G$ is connected, for each $1 \le j < k \le n$,
there exists a walk $x_j = x_{i_0}, x_{i_1}, \ldots , x_{i_r} = x_k$ in $G$.
Then, 
$
-{\bf e}_j + {\bf e}_k = 
d {\bf e}_{i_1}  + \cdots + d {\bf e}_{i_r}
-
\sum_{s=1}^r
({\bf e}_{i_{s-1}} + (d-1) {\bf e}_{i_s})
$ belongs to ${\mathbb Z} A_P$.
Hence, it follows that 
$${\mathbb Z}A_P = 
\left\{
(z_1, \ldots ,z_n)^T \in {\mathbb Z}^n
 : 
\sum_{i=1}^n z_i =d
\right\}.$$
Moreover, since $d {\bf e}_1, \ldots, d {\bf e}_n$
belong to $\{\rho(C_1),\ldots, \rho(C_m)\}$, we have
$$
{\mathbb Q}_{\ge 0} A_P = 
\left\{
(q_1, \ldots ,q_n)^T \in {\mathbb Q}^n
 : 
q_1, \ldots, q_n \ge 0
\right\}
.$$
Thus, 
$${\mathbb Z}A_P  \cap {\mathbb Q}_{\ge 0} A_P  = 
\left\{
(z_1, \ldots ,z_n)^T \in {\mathbb Z}^n
 : 
z_1,\ldots, z_n \ge 0,
\sum_{i=1}^n z_i =d
\right\}.$$
If $x_j$ and $x_k$ is not comparable,
then $(d-1) {\bf e}_j + {\bf e}_k
$ belongs to ${\mathbb Z}A_P  \cap {\mathbb Q}_{\ge 0} A_P
$, and does not belong to ${\mathbb Z}_{\ge 0}A_P$.
Hence 
$K[y_1,\ldots, y_m]/I_{{\mathcal  M}_d(P)}$ is not normal.

On the other hand, 
${\mathbb Z}A_P  \cap {\mathbb Q}_{\ge 0} A_P =  {\mathbb Z}_{\ge 0}A_{P'} $,
where $P'$ is a chain of length $n-1$.
Hence,  the normalization of $K[y_1,\ldots, y_m]/I_{{\mathcal  M}_d(P)}$ 
is the $d$-th Veronese subring.
\end{proof}

\section{The toric ideal $I_{\Ptwo}$ and edge rings}

In this section, we give some examples and remarks.
In particular,
we apply the results in Section 1 to a toric ring arising from a graph.

\begin{Example}
\label{d=2}
The following binomials form a Gr\"obner basis of  $I_{\Ptwo}$ with respect to $<_{\rm rev}$
appearing in the proof of Theorem \ref{main}:
\begin{eqnarray*}
y_{i\ell} y_{jk} - y_{ik} y_{j \ell} & & 
(i < j < k < \ell \mbox{ and } \{x_i, x_\ell\}, \{x_j, x_k\}, \{x_i,x_k\}, \{x_j, x_\ell \}
\in \Ptwo)\\
y_{i\ell} y_{jk} - y_{ij} y_{k \ell} & & 
(i < j < k < \ell \mbox{ and } \{x_i, x_\ell\}, \{x_j, x_k\}, \{x_i,x_j\}, \{x_k, x_\ell\}
\in \Ptwo)\\
 y_{ik} y_{j \ell}- y_{ij} y_{k \ell}  & & 
(i < j < k < \ell \mbox{ and } \{x_i, x_k\}, \{x_j, x_\ell\}, \{x_i,x_j\}, \{x_k, x_\ell\}
\in \Ptwo)\\
y_{i j} y_{i k} - y_{ii} y_{j k} & & (i < j < k \mbox{ and } \{x_i, x_j,x_k\} \in {\mathcal M}_3(P))\\
y_{jj} y_{i k} - y_{ij} y_{j k} & & (i < j < k \mbox{ and }\{x_i, x_j,x_k\} \in {\mathcal M}_3(P))\\
y_{ik} y_{j k} - y_{kk} y_{i j} & & (i < j < k \mbox{ and } \{x_i, x_j,x_k\} \in {\mathcal M}_3(P))\\
y_{ij}^2 - y_{ii} y_{jj} & & (i< j \mbox{ and } \{x_i, x_j\} \in \Ptwo),
\end{eqnarray*}
where each variable $y_{ij}$ corresponds to $\{x_i, x_j\} \in \Ptwo$.
The initial monomial of each binomial is the first monomial.
\end{Example}

Let $G$ be a simple graph and let 
$A_G$ be the vertex-edge incidence matrix of $G$.
The toric ideal $I_{A_G}$ of $A_G$ is referred to as the toric ideal of 
the {\it edge ring} of the simple graph $G$
and has been well  studied (e.g., \cite{OH1, OH2, Vil}).
In particular, a graph theoretical characterization for $I_{A_G}$ generated by quadratic binomials
is given in \cite[Theorem 1.2]{OH2}.
Let $\overline{A_G} =(\ 2 E_n \  | \  A_G \ )$, where $E_n$ is an identity matrix.
Then, we can regard ${\overline{A_G}}$ as 
a configuration arising from a non-simple graph whose edge set is
$E(G) \cup L$, where $L$ is the set of loops at vertices of $G$.
The edge rings of non-simple graphs are studied in, e.g., \cite{OHnormal}.
See also \cite[Remark 4.18]{RTT}.
If $G$ is the comparability graph of a poset $P$,
then $I_{\Ptwo} = I_{\overline{A_G}}$ holds.
The discussion in the proof of Theorem \ref{main} shows the following:

\begin{Theorem}
\label{stronglychordal}
Let $G$ be a graph. 
Then the following conditions are equivalent:
\begin{enumerate}
\item[(i)]
$G$ is strongly chordal;
\item[(ii)]
$I_{\overline{A_G}}$ has a quadratic Gr\"obner basis;
\item[(iii)]
$I_{\overline{A_G}}$ is generated by quadratic binomials.
\end{enumerate}
\end{Theorem}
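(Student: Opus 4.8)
The plan is to reduce Theorem~\ref{stronglychordal} to Theorem~\ref{main} by realizing $\overline{A_G}$ as the configuration of a poset whenever $G$ is strongly chordal, and by analyzing directly the non-quadratic obstructions when $G$ is not. The key observation, already noted in the excerpt, is that when $G$ is the comparability graph $G_P$ of a poset $P$, one has $I_{\Ptwo} = I_{\overline{A_G}}$: the $2$-multichains of $P$ are exactly the pairs $\{x_i,x_j\}$ with $x_i < x_j$ (giving the columns of $A_G$) together with the repeated elements $\{x_i,x_i\}$ (giving the columns of $2E_n$), and $\rho(\{x_i,x_i\}) = 2{\bf e}_i$. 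So for such $G$, all five conditions of Theorem~\ref{main} specialized to $d=2$ coincide with conditions (ii) and (iii) here.

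First I would prove (i)~$\Longrightarrow$~(ii). Suppose $G$ is strongly chordal. I need a poset $P$ with $G_P = G$. Here I would invoke the standard fact (see, e.g., \cite{graphbook}) that a strongly chordal graph is chordal, hence in particular a \emph{chordal comparability graph} — more precisely, chordal graphs that are comparability graphs are exactly the strongly chordal... no: the correct route is that a strong perfect elimination ordering $v_1,\dots,v_n$ of $G$ yields a transitive orientation. Concretely, orient each edge $\{v_i,v_j\}$ with $i<j$ from $v_i$ to $v_j$; condition (i) of the definition of strong perfect elimination ordering (together with simpliciality) forces this orientation to be transitive, so it defines a poset $P$ with $x_i := v_i$, $x_i < x_j$ when there is a directed path, and $G_P = G$. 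Then $G_P$ is chordal, so Theorem~\ref{main}~(i)~$\Longrightarrow$~(v) gives that $I_{\Ptwo}$ has a quadratic Gröbner basis (indeed the explicit one in Example~\ref{d=2}), and since $I_{\Ptwo} = I_{\overline{A_G}}$ we are done. The implication (ii)~$\Longrightarrow$~(iii) is trivial.

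For (iii)~$\Longrightarrow$~(i), I would argue the contrapositive. If $G$ is not strongly chordal, then either $G$ is not chordal, or $G$ is chordal but has no strong perfect elimination ordering; in the latter case it is classical (the forbidden subgraph characterization of strongly chordal graphs, \cite{graphbook}) that $G$ contains an induced $k$-sun for some $k\ge 3$. In either case I would exhibit an explicit degree-$\ge 3$ binomial in $I_{\overline{A_G}}$ that satisfies the hypotheses of Lemma~\ref{lemma1}. For an induced cycle $(v_1,\dots,v_{2\ell})$ of length $\ge 4$ (which, unlike in the comparability-graph setting, need not be even — but for odd cycles of length $\ge 5$ one uses a slightly different combination), one takes the edge variables and the loop variables $y_{ii}$ and forms the relation $\sum \rho(\text{odd-indexed edges/loops}) = \sum \rho(\text{even-indexed})$ exactly as in the (ii)~$\Longrightarrow$~(i) part of Theorem~\ref{main}; the point is that because the cycle is induced, any decomposition of a two-term subsum $\rho(C_i)+\rho(C_j)$ must be the trivial one, so Lemma~\ref{lemma1} applies. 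For an induced $k$-sun I would similarly write down the sun relation among the $2k$ rim/spoke edges and check the same local rigidity condition. I expect the main obstacle to be this last step: producing, for the $k$-sun and for odd holes, the precise list of edges and loops whose $\rho$-vectors give a relation satisfying the Lemma~\ref{lemma1} hypothesis, and verifying that no "accidental" quadratic coincidence $\rho(C_k)+\rho(C_\ell)=\rho(C_p)+\rho(C_q)$ occurs among the chosen columns; the loop columns $2{\bf e}_i$ make some such coincidences possible in principle, so the verification must be done with care, using that the relevant induced subgraph has no chords.
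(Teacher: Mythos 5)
Your reduction of (i) $\Longrightarrow$ (ii) to Theorem \ref{main} does not work, because the key claim it rests on is false: a strong perfect elimination ordering does \emph{not} yield a transitive orientation, and a strongly chordal graph need not be a comparability graph at all. Already for the path $v_1 - v_2 - v_3$ (with this ordering, which is a strong perfect elimination ordering), your rule ``orient $\{v_i,v_j\}$, $i<j$, from $v_i$ to $v_j$'' gives $v_1 \to v_2 \to v_3$ with $\{v_1,v_3\} \notin E(G)$, so it is not transitive; simpliciality of $v_i$ controls pairs of \emph{later neighbors of $v_i$}, not directed paths through $v_i$, which is what transitivity needs. More seriously, no construction can close this gap: the ``net'' (a triangle $a,b,c$ with a pendant vertex attached to each of $a,b,c$) is chordal with no induced sun, hence strongly chordal, but it admits no transitive orientation (the middle vertex of any transitive tournament on the triangle has both an in-edge and an out-edge inside the triangle, and its pendant edge then violates transitivity either way). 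So the class of strongly chordal graphs strictly contains the chordal comparability graphs, and you cannot realize $\overline{A_G}$ as $\rho(\Ptwo)$ for a poset $P$ in general; consequently Theorem \ref{main} cannot be invoked as a black box for this direction.

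The paper's proof of (i) $\Longrightarrow$ (ii) instead reuses the \emph{argument} of (i) $\Longrightarrow$ (v) in Theorem \ref{main}, not its statement: for $d=2$ a ``multichain'' is just an edge or a loop, and an inspection of that proof shows it only uses the strong perfect elimination ordering of the graph (simpliciality of the smallest vertex, and condition (ii) in the definition of strong perfect elimination ordering), never the transitive orientation. Hence the same reverse lexicographic order, with variables indexed so that smaller vertices come first, gives a quadratic Gr\"obner basis of $I_{\overline{A_G}}$ (of the shape in Example \ref{d=2}) for every strongly chordal $G$. Your (iii) $\Longrightarrow$ (i) direction is the paper's argument in outline -- Lemma \ref{lemma1} applied to induced even cycles, to induced odd cycles of length $\ge 5$ (using the loop column $2{\bf e}_1$), and to the outer cycle of an induced $k$-sun (using that the rim vertices are independent) -- but you leave exactly the verification of the Lemma \ref{lemma1} hypothesis unfinished, which the paper carries out; that part of your plan is repairable, whereas the (i) $\Longrightarrow$ (ii) step needs to be replaced by the argument just described.
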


\begin{proof}
The discussion in proof of Theorem \ref{main} shows that (i) $\Longrightarrow$ (ii).
In addition, (ii) $\Longrightarrow$ (iii) holds in general.
Hence, it is enough to show  (iii) $\Longrightarrow$ (i).
Suppose that $I_{\overline{A_G}}$ is generated by quadratic binomials.
From the proof of Theorem \ref{main}, it follows that $G$ has no induced even cycles.
Suppose that $G$ has an induced odd cycle $C=(v_1,v_2,\ldots, v_{2 \ell +1})$ of length $2 \ell +1 \geq 5$.
Since $C$ has no chords,
$
 ( {\bf e}_1 +{\bf e}_{2\ell+1})  + \sum_{k=1}^{\ell} ( {\bf e}_{2k -1 } +{\bf e}_{2k}) = 
2{\bf e}_1 + 
\sum_{k=1}^{\ell} ( {\bf e}_{2k} +{\bf e}_{2k+1})
$
satisfies the condition in Lemma \ref{lemma1}, which is a contradiction.
Thus, $G$ is chordal.
Suppose that $G$ is not strongly chordal.
By \cite[Theorem 7.2.1]{graphbook}, a sun graph $S_\ell$ ($\ell \ge 3$) is an induced subgraph of $G$.
Here $S_\ell$ is a graph whose edge set is
$E(S_\ell) = \{\{v_1,v_2\}, \{v_2,v_3\}, \ldots, \{v_{2\ell-1},v_{2 \ell} \},\{v_1,v_{2 \ell}\}\}
\cup E(G')$, where $G'$ is a graph on the vertex set $\{v_2,v_4,\ldots, v_{2\ell}\}$.
Since $\{v_1,v_3,\ldots, v_{2\ell-1}\}$ is independent in $G$, 
$
\sum_{k=1}^{\ell} ( {\bf e}_{2k -1 } +{\bf e}_{2k}) = 
({\bf e}_1 + {\bf e}_{2\ell}) +
\sum_{k=1}^{\ell-1} ( {\bf e}_{2k} +{\bf e}_{2k+1})
$
satisfies the condition in Lemma \ref{lemma1},  which is a contradiction.
Therefore, $G$ is strongly chordal, as desired.
\end{proof}

\begin{Example}
Let $G$ be the comparability graph of a poset $P = \{x_1, x_2, x_3, x_4, x_5\}$ 
whose maximal chains are $x_1 < x_2 < x_3$ and $x_1 < x_4 < x_5$.
Then $G$ is strongly chordal.
However, the toric ideal $I_{A_G}$ is a principal ideal generated by a binomial of degree $3$.
This example shows that, if we define $\Pd$ as the set of all {\it chains} $x_{i_1} < \cdots < x_{i_d}$ of $P$,
then the statement of Theorem \ref{main} does not hold.
\end{Example}

\end{document}